\newcommand{\scal}[2]{\left\langle{#1},{#2}  \right\rangle}
\newcommand{\menge}[2]{\big\{{#1}~\big |~{#2}\big\}}
\newcommand{\nnn}{\ensuremath{{n\in{\mathbb N}}}}
\newcommand{\Fix}{\ensuremath{\operatorname{Fix}}}
\newcommand{\Id}{\ensuremath{\operatorname{Id}}}
\newcommand{\ran}{\ensuremath{\operatorname{ran}}}
\newtheorem{theorem}{Theorem}[section]
\newtheorem{lemma}[theorem]{Lemma}
\theoremstyle{plain}{\theorembodyfont{\rmfamily}
}
\theoremstyle{plain}{\theorembodyfont{\rmfamily}
}
\theoremstyle{plain}{\theorembodyfont{\rmfamily}
\newtheorem{algorithm}[theorem]{Algorithm}}
\theoremstyle{plain}{\theorembodyfont{\rmfamily}
\newtheorem{example}[theorem]{Example}}
\theoremstyle{plain}{\theorembodyfont{\rmfamily}
\newtheorem{remark}[theorem]{Remark}}
\providecommand{\bx}{\mathbf{x}}
\providecommand{\be}{\mathbf{e}}
\providecommand{\RR}{\mathbb{R}}
\newcommand{\sepp}{\setlength{\itemsep}{-3pt}}
\title{A projection method for approximating
fixed points of quasi nonexpansive mappings without
the usual demiclosedness condition }
\author{Heinz H. Bauschke\thanks{Mathematics. Irving K. Barber School, University of British Columbia,  Kelowna, B.C. V1V 1V7, Canada. {\tt heinz.bauschke@ubc.ca}. },
Jiawei Chen\thanks{School of Mathematics and Statistics, Wuhan University,
Wuhan 430072, P.R.~China. {\tt J.W.Chen713@163.com}. }, and
Xianfu Wang\thanks{Mathematics. Irving K. Barber School, University of
British Columbia,  Kelowna, B.C. V1V 1V7, Canada. {\tt shawn.wang@ubc.ca}.
}}
\date{November 7, 2012}
\begin{document}

\maketitle

\begin{abstract}
\noindent
We introduce and analyze an abstract algorithm that aims to find
the projection onto a closed convex subset of a Hilbert space.
When specialized to the fixed point set of a
quasi nonexpansive mapping, the
required sufficient condition (termed ``fixed-point closed'')
is less restrictive
than the usual conditions based on the demiclosedness principle.
A concrete example of a subgradient projector is presented which
illustrates the applicability of this generalization.
\end{abstract}

\noindent {\bf Keywords:}
convex function,
demiclosedness principle,
fixed point,
nonexpansive,
polyhedron,
projection,
quasi nonexpansive,
subgradient projection.

\noindent {\bf 2010 Mathematics Subject Classification:}
Primary 47H09; Secondary 52B55, 65K10, 90C25.
\section{Introduction}

Throughout this note, we assume that
\begin{equation}
\text{$X$ is a real Hilbert space with inner product $\scal{\cdot}{\cdot}$
and norm $\|\cdot\|$.}
\end{equation}
Suppose that
\begin{equation}
\label{e:global1}
\text{$C$ is a closed convex subset of $X$, and $x_0\in X$.}
\end{equation}
We are interested in finding the projection (nearest point mapping)
$P_Cx_0$, i.e., the unique solution to the optimization
problem
\begin{equation}
\label{e:theproblem}
d(x_0,C) := \min_{c\in C}\|x_0-c\|,
\end{equation}
especially when $C$ is the fixed point set of some operator $T\colon X\to
X$. 
It will be convenient to set,
for arbitrary given vectors $x$ and $y$ in $X$,
\begin{equation}
H(x,y) := \menge{z \in X}{\|y-z\|\leq\|x-z\|} = \menge{z\in
X}{2\scal{z}{x-y}\leq\|x\|^2-\|y\|^2}.
\end{equation}
Note that $H(x,y)$ is equal to either $X$ (if $x=y$) or a halfspace;
in any case, the projection onto $H(x,y)$ is easy to compute and has a well
known closed form.
In order to solve \eqref{e:theproblem}, we shall study the following simple
abstract iteration:

\begin{algorithm}
\label{a:1}
Recall the assumption~\eqref{e:global1}, and set $C_0 = X$.
Given $\nnn$ and $x_n\in X$, pick $y_n\in X$, and set
\begin{equation}
C_{n+1} := C_n \cap H(x_n,y_n)
\;\;\text{and}\;\;
x_{n+1} = P_{C_{n+1}}x_0.
\end{equation}
\end{algorithm}
Observe that if the sequence is well defined, then
\begin{equation}
\label{e:nested}
C_0\supseteq C_1 \supseteq \cdots C_n \supseteq C_{n+1}\supseteq \cdots
\end{equation}
and so
\begin{equation}
\label{e:increases}
\|x_0-x_n\| = d(x_0,C_n) \leq d(x_0,C_{n+1})=\|x_0-x_{n+1}\|
\end{equation}
for every $\nnn$.
It then follows that
\begin{equation}
\label{e:beta}
\beta := \lim_\nnn\|x_0-x_n\| = \sup_\nnn\|x_0-x_n\| \in
\left[0,+\infty\right]
\end{equation}
is well defined.
Furthermore, if $m < n$, then
$x_n\in C_m$ which implies
\begin{equation}
\label{e:kolmo}
\scal{x_n-x_m}{x_0-x_m}\leq 0
\end{equation}
as well as
\begin{equation}
\label{e:bla}
\|y_m-x_n\|\leq\|x_m-x_n\|
\end{equation}
because
$x_n \in C_n \subseteq C_{m+1}\subseteq H(x_m,y_m)$.

\begin{lemma}
\label{l:key}
Suppose that the sequence $(x_n)_\nnn$ is generated by
Algorithm~\ref{a:1}.
Suppose also that
for every subsequence $(x_{k_n})_\nnn$ of $(x_n)$, we have
\begin{equation}
\label{e:superdemi}
\left.
\begin{array}{c}
x_{k_n}\to \bar{x}\\
x_{k_n}-y_{k_n}\to 0
\end{array}
\right\}
\;\;\Rightarrow\;\;
\bar{x}\in C.
\end{equation}
Then every bounded subsequence of $(x_n)_\nnn$ must converge to a point in
$C$.
\end{lemma}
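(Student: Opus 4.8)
The plan is to show that the existence of a \emph{single} bounded subsequence already forces the \emph{entire} sequence $(x_n)_\nnn$ to converge to a point of $C$; the assertion then follows at once. First I would note that, by \eqref{e:increases} and \eqref{e:beta}, the scalar sequence $(\|x_0-x_n\|)_\nnn$ is nondecreasing with limit $\beta$. Hence if some subsequence $(x_{k_n})_\nnn$ is bounded, then $\beta=\lim_\nnn\|x_0-x_{k_n}\|<+\infty$, so $\|x_n\|\leq\|x_0\|+\beta$ for \emph{every} $\nnn$; that is, the whole sequence $(x_n)_\nnn$ is bounded.

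Next I would establish that $(x_n)_\nnn$ is Cauchy, via the classical Haugazeau-type estimate. Fix $m<n$. Since $x_m=P_{C_m}x_0$, the set $C_m$ is closed and convex, and $x_n\in C_n\subseteq C_m$, the projection inequality \eqref{e:kolmo} gives $\scal{x_n-x_m}{x_0-x_m}\leq0$, equivalently $\scal{x_n-x_0}{x_m-x_0}\geq\|x_m-x_0\|^2$. Expanding $\|x_n-x_m\|^2=\|x_n-x_0\|^2-2\scal{x_n-x_0}{x_m-x_0}+\|x_m-x_0\|^2$ and substituting this bound yields
\begin{equation*}
\|x_n-x_m\|^2 \;\leq\; \|x_0-x_n\|^2-\|x_0-x_m\|^2 \qquad (m<n).
\end{equation*}
Because $(\|x_0-x_n\|^2)_\nnn$ is nondecreasing and bounded above by $\beta^2<+\infty$, it is a Cauchy sequence in $\RR$, so the right-hand side tends to $0$ as $m,n\to\infty$. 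Therefore $(x_n)_\nnn$ is Cauchy and converges to some $\bar{x}\in X$.

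It remains to verify $\bar{x}\in C$ through the hypothesis \eqref{e:superdemi}. Taking $n=m+1$ in \eqref{e:bla} gives $\|y_m-x_{m+1}\|\leq\|x_m-x_{m+1}\|$, hence $\|x_m-y_m\|\leq\|x_m-x_{m+1}\|+\|x_{m+1}-y_m\|\leq 2\|x_m-x_{m+1}\|\to0$ since $(x_n)_\nnn$ converges. Thus $x_n\to\bar{x}$ and $x_n-y_n\to0$, and applying \eqref{e:superdemi} to the full sequence (a subsequence of itself) gives $\bar{x}\in C$. Finally, every subsequence of the convergent sequence $(x_n)_\nnn$ — in particular every bounded one — converges to the same limit $\bar{x}\in C$, which is precisely the claim.

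I do not anticipate a genuine obstacle here: the only step needing a moment's thought is the first one — recognizing that a bounded subsequence forces global boundedness, which works only because $\|x_0-x_n\|$ is \emph{monotone} — after which the argument is the standard Fej\'er/monotone Cauchy computation followed by a one-line invocation of the closedness hypothesis \eqref{e:superdemi}.
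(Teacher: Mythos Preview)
Your argument is correct and follows essentially the same route as the paper's proof: the Cauchy estimate $\|x_n-x_m\|^2\leq\|x_0-x_n\|^2-\|x_0-x_m\|^2$ derived from \eqref{e:kolmo}, followed by \eqref{e:bla} to obtain $x_n-y_n\to0$, and then an appeal to \eqref{e:superdemi}. The only cosmetic difference is that the paper runs the Cauchy computation directly along the given bounded subsequence $(x_{k_n})_\nnn$, whereas you first observe (using the monotonicity of $\|x_0-x_n\|$) that one bounded subsequence forces $\beta<+\infty$ and hence boundedness of the \emph{entire} sequence, and then carry out the same computation on the full sequence; this is a harmless and arguably tidier variant.
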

\begin{proof}
Let $(x_{k_n})_\nnn$ be a bounded subsequence of $(x_n)_\nnn$.
It follows from \eqref{e:increases} that $\beta<+\infty$.
Let $n> m$.
Using \eqref{e:kolmo}, we obtain
\begin{subequations}
\begin{align}
\|x_{k_n}-x_{k_m}\|^2 &= \|x_{k_n}-x_0\|^2 - \|x_{k_m}-x_0\|^2 +
2\scal{x_{k_n}-x_{k_m}}{x_0-x_{k_m}}\\
&\leq \|x_{k_n}-x_0\|^2 - \|x_{k_m}-x_0\|^2\\
&\to \beta^2-\beta^2 = 0\;\;\text{as $n\geq m\to +\infty$.}
\end{align}
\end{subequations}
Hence $(x_{k_n})_\nnn$ is a Cauchy sequence.
Thus, there exists $\bar{x}\in X$ such that $x_{k_n}\to \bar{x}$.
Now, from \eqref{e:bla}, we obtain
$\|y_{k_n}-x_{k_{n+1}}\| \leq \|x_{k_n}-x_{k_{n+1}}\| \to
\|\bar{x}-\bar{x}\| = 0$ and thus $y_{k_n}-x_{k_{n+1}}\to 0$.
It follows that $x_{k_n}-y_{k_n} = (x_{k_n}-x_{k_{n+1}}) + (x_{k_{n+1}} -
y_{k_n})\to 0$.
Now apply \eqref{e:superdemi}.
\end{proof}

The previous result allows us to derive the following dichotomy result.

\begin{theorem}[dichotomy]
\label{t:main}
Suppose that $(x_n)_\nnn$ is generated by
Algorithm~\ref{a:1}, that $(\forall\nnn)$ $C\subseteq C_n$,
and that for every subsequence $(x_{k_n})_\nnn$ of $(x_n)$, we have
\begin{equation}
\left.
\begin{array}{c}
x_{k_n}\to \bar{x}\\
x_{k_n}-y_{k_n}\to 0
\end{array}
\right\}
\;\;\Rightarrow\;\;
\bar{x}\in C.
\end{equation}
Then exactly one of the following holds:
\begin{enumerate}
\item
\label{t:main1}
$C\neq\varnothing$ and $x_n\to P_Cx_0$.
\item
\label{t:main2}
$C=\varnothing$ and $\|x_n\|\to+\infty$.
\end{enumerate}
\end{theorem}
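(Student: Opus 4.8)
The plan is to first observe that the two alternatives are mutually exclusive --- one demands $C=\varnothing$, the other $C\neq\varnothing$ --- so it suffices to prove that at least one of them holds, and we argue by cases according to whether or not $C$ is empty. Throughout, recall that ``generated by Algorithm~\ref{a:1}'' presupposes that each $C_n$ is nonempty, so that $x_n=P_{C_n}x_0$ is well defined, and that each $C_n$ is closed and convex, being the intersection of $X$ with finitely many halfspaces $H(x_k,y_k)$.

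Suppose first that $C\neq\varnothing$; the goal is alternative~\ref{t:main1}. Since $C\subseteq C_n$ for every $\nnn$, we obtain $\|x_0-x_n\|=d(x_0,C_n)\leq d(x_0,C)<+\infty$, so $(x_n)_\nnn$ is bounded and, by \eqref{e:beta}, $\beta\leq d(x_0,C)$. Now apply Lemma~\ref{l:key} to the bounded sequence $(x_n)_\nnn$ itself (a bounded subsequence of itself): it converges to some $\bar x\in C$. It remains to identify $\bar x$ with $P_Cx_0$: on the one hand $\|x_0-\bar x\|=\lim_\nnn\|x_0-x_n\|=\beta\leq d(x_0,C)$, and on the other hand $\bar x\in C$ forces $\|x_0-\bar x\|\geq d(x_0,C)$. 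Hence $\|x_0-\bar x\|=d(x_0,C)$, and uniqueness of the nearest point in the closed convex set $C$ yields $\bar x=P_Cx_0$, so $x_n\to P_Cx_0$.

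Next suppose that $C=\varnothing$; the goal is alternative~\ref{t:main2}, namely $\|x_n\|\to+\infty$. Assume this fails; then $(x_n)_\nnn$ has a bounded subsequence $(x_{k_n})_\nnn$. Lemma~\ref{l:key} forces this subsequence to converge to a point of $C$, which is impossible since $C=\varnothing$. Therefore $\|x_n\|\to+\infty$, and the proof is complete.

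As for where the real work sits: it is already carried out in Lemma~\ref{l:key}, whose Cauchy-sequence estimate --- driven by the monotone quantity $\beta$ together with the ``fixed-point closed'' hypothesis \eqref{e:superdemi} --- is the engine of both cases. The only new ingredients here are (i) noting that the extra assumption $C\subseteq C_n$ bounds $\|x_0-x_n\|$ by $d(x_0,C)$ when $C\neq\varnothing$, which both makes $(x_n)$ bounded and, via the variational characterization of the projection, pins the limit to $P_Cx_0$; and (ii) reading the empty case as a one-line contrapositive of Lemma~\ref{l:key}. I do not anticipate any serious obstacle beyond being careful that ``generated by Algorithm~\ref{a:1}'' already encodes well-definedness of the $C_n$, so that Lemma~\ref{l:key} is genuinely applicable in both branches.
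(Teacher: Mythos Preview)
Your proof is correct and follows essentially the same approach as the paper's: both branches invoke Lemma~\ref{l:key} after bounding $\|x_0-x_n\|$ by $d(x_0,C)$ via the hypothesis $C\subseteq C_n$, and the empty case is handled by the same contrapositive argument. Your write-up merely adds a bit more detail (mutual exclusivity, the reverse inequality for the projection) and some commentary, but the logical structure is identical.
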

\begin{proof}
Note that
\begin{equation}
\label{e:rot}
(\forall\nnn)\;\;
\|x_0-x_{n}\| = d(x_0,C_n)\leq d(x_0,C).
\end{equation}

\ref{t:main1}:
Assume that $C\neq\varnothing$.
Then $(x_n)_\nnn$ is bounded by \eqref{e:rot}.
By Lemma~\ref{l:key}, $\bar{x} := \lim_\nnn x_n \in C$.
In turn, \eqref{e:rot} yields $\|x_0-\bar{x}\|\leq d(x_0,C)$.
Therefore, $\bar{x}=P_Cx_0$, as claimed.

\ref{t:main2}:
Suppose that $\|x_n\|\not\to+\infty$.
Then $(x_n)_\nnn$ contains a bounded subsequence which,
by Lemma~\ref{l:key}, must converge to a point in $C$.
Hence if $C=\varnothing$, then $\|x_n\|\to+\infty$.
\end{proof}

\begin{remark}
Several comments regarding Theorem~\ref{t:main} are in order.
\begin{enumerate}
\item
Algorithm~\ref{a:1} is related to a method
studied by Takahashi et al in \cite[Theorem~4.1]{TTT}.
(See also \cite[Theorem~2]{RS2,RS} for Bregman-distance based variants.)
While that method is more flexible in some ways, our method has the
advantage of requiring neither nonexpansiveness of the given operator
nor the nonemptiness of the target set.
\item
Our proofs are different because we establish strong convergence
directly via a Cauchy sequence argument. The proofs mentioned in
the previous item are based on a Kadec-Klee property or on Opial's
property.
(We expect that our proof will generalize to Bregman distances,
possibly incorporating errors and families of operators.)
\item
As we shall see in Section~\ref{s:sp} below,
our framework encompasses subgradient
projectors which are important in optimization.
\item
The computation of the sequence $(x_n)_\nnn$ requires
to compute projections of the \emph{same} initial point $x_0$
onto polyhedra (intersections of finitely many halfspaces).
While this is not necessarily an easy task, this is considered to be
a standard quadratic programming problem in convex optimization.
Moreover, since $C_{n+1}$ is constructed from $C_n$ by intersecting
with the halfspace $H(x_n,y_n)$, it seems plausible to apply
\emph{active set methods} (with a warm start) to solve these
projections. While a detailed excursion on this matter is beyond the
scope of this paper, we do refer the reader to \cite{Arioli,Huynh,Nurminski}
for references
on computing projections onto polyhedra.
\end{enumerate}
\end{remark}

\section{An application to finding nearest fixed points}

Recall that $T\colon X\to X$ is called
\emph{nonexpansive} if
\begin{equation}
(\forall x\in X)(\forall y\in X)\quad
\|Tx-Ty\|\leq\|x-y\|;
\end{equation}
moreover, $T$ is \emph{quasi nonexpansive} if
\begin{equation}
(\forall x\in X)(\forall y\in \Fix T)\quad
\|Tx-y\|\leq\|x-y\|,
\end{equation}
where $\Fix T := \menge{x\in X}{x=Tx}$.
See \cite{GK,GR,BC} for further information on the fixed point theory
of nonexpansive mappings.

The next result is readily checked.

\begin{lemma}
\label{l:easy}
Let $T\colon X\to X$ be quasi nonexpansive.
Consider the following properties:
\begin{enumerate}
\item
\label{l:easy1}
$T$ is nonexpansive.
\item
\label{l:easy2}
$T$ is continuous.
\item
\label{l:easy3}
$T$ is \emph{fixed-point closed}, i.e.,
if
$x_n\to\bar{x}$ and
$x_n-Tx_n\to 0$, then
$\bar{x}\in\Fix T$.
\end{enumerate}
Then \ref{l:easy1}$\Rightarrow$\ref{l:easy2}$\Rightarrow$\ref{l:easy3}.
\end{lemma}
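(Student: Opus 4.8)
The plan is to verify the two implications directly; both are one-liners. For \ref{l:easy1}$\Rightarrow$\ref{l:easy2}, I would simply note that a nonexpansive operator satisfies $\|Tx-Ty\|\leq\|x-y\|$ for all $x,y\in X$, i.e., it is $1$-Lipschitz, and Lipschitz continuity trivially implies (norm-to-norm) continuity. The quasi nonexpansiveness hypothesis plays no role in this step; it is retained only because the lemma is meant to situate ``fixed-point closed'' relative to the more familiar notions, and because the converse implications fail.

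For \ref{l:easy2}$\Rightarrow$\ref{l:easy3}, suppose $T$ is continuous and let $(x_n)_\nnn$ be a sequence in $X$ with $x_n\to\bar{x}$ and $x_n-Tx_n\to 0$. Continuity of $T$ yields $Tx_n\to T\bar{x}$. On the other hand, $Tx_n = x_n-(x_n-Tx_n)\to \bar{x}-0=\bar{x}$. Since strong limits in $X$ are unique, $T\bar{x}=\bar{x}$, i.e., $\bar{x}\in\Fix T$, which is precisely the fixed-point closedness property. Composing the two implications finishes the proof.

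There is essentially no obstacle here: every step is an immediate consequence of the definitions together with uniqueness of strong limits. The only point worth flagging explicitly is that ``continuous'' is understood in the norm-to-norm (sequential) sense, and it is exactly this notion that the argument uses and that the convergence analysis of Algorithm~\ref{a:1} requires — no weak continuity, demiclosedness, or Opial-type property is invoked, which is the improvement the paper is after.
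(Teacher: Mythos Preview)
Your argument is correct and is exactly the routine verification the paper has in mind; indeed, the paper does not write out a proof at all, stating only that the result ``is readily checked.'' Your two one-line implications (Lipschitz $\Rightarrow$ continuous, and continuity plus uniqueness of limits $\Rightarrow$ fixed-point closed) constitute precisely that check.
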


\begin{remark}
It is well known that if $T\colon X\to X$ is nonexpansive,
then
\begin{equation}
\left.
\begin{array}{c}
x_n\rightharpoonup \bar{x}\\
x_n-Tx_n \to 0
\end{array}
\right\}
\;\;\Rightarrow\;\;
\bar{x}\in\Fix T;
\end{equation}
this is the famous demiclosedness principle --- to be precise,
this states that $\Id-T$ is demiclosed at $0$.
For recent results on this principle, see \cite{Demi} and the references
therein.
It is clear that demiclosedness of $\Id-T$ at $0$ implies
that $T$ is fixed-point closed; the converse, however, is false
(see Example~\ref{ex:bad} below).
\end{remark}

Our main result now yields easily the following result, which
by Lemma~\ref{l:easy} is applicable in particular when $T$ is nonexpansive.
(See also \cite[Theorem~4.1]{TTT} for extensions in the nonexpansive case.)

\begin{theorem}[trichotomy]
\label{t:fix}
Let $T\colon X\to X$ be quasi nonexpansive and fixed-point closed,
let $x_0\in X$, and set $C_0 := X$.
Given $\nnn$ and $x_n$, set
\begin{equation}
C_{n+1} := C_n \cap H(x_n,Tx_n)
\;\;\text{and}\;\;
x_{n+1} = P_{C_{n+1}}x_0.
\end{equation}
Then exactly one of the following holds:
\begin{enumerate}
\item
\label{t:fix1}
$\Fix T\neq\varnothing$ and $x_n\to P_{\Fix T}x_0$.
\item
\label{t:fix2}
$\Fix T = \varnothing$ and $\|x_n\|\to+\infty$.
\item
\label{t:fix3}
$\Fix T = \varnothing$ and the sequence is not well defined (i.e.,
$C_{n+1}$ is empty for some $n$).
\end{enumerate}
\end{theorem}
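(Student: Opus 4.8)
The strategy is to reduce Theorem~\ref{t:fix} to the dichotomy Theorem~\ref{t:main} by checking that, with the choice $y_n := Tx_n$, all hypotheses of that theorem are met whenever the sequence is well defined. First I would observe that the three listed cases are mutually exclusive and that at least one of them occurs: either the sequence $(x_n)_\nnn$ fails to be well defined at some stage (some $C_{n+1}=\varnothing$), or it is well defined for all $\nnn$; in the latter case Theorem~\ref{t:main} will apply and produce exactly one of \ref{t:fix1} or \ref{t:fix2}. It therefore remains to handle the well-defined case and to rule out the combination ``$\Fix T\neq\varnothing$ and the sequence is not well defined.''

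Assume the sequence is well defined. To invoke Theorem~\ref{t:main} with $C := \Fix T$, I must verify two things: (a) $\Fix T\subseteq C_n$ for every $\nnn$, and (b) the implication \eqref{e:superdemi} with $y_{k_n}=Tx_{k_n}$. For (a), argue by induction: $\Fix T\subseteq C_0=X$ trivially, and if $\Fix T\subseteq C_n$ then for any $z\in\Fix T$ quasi nonexpansiveness gives $\|Tx_n-z\|=\|Tx_n-z\|\leq\|x_n-z\|$, i.e. $z\in H(x_n,Tx_n)$, hence $z\in C_n\cap H(x_n,Tx_n)=C_{n+1}$. For (b), suppose $x_{k_n}\to\bar x$ and $x_{k_n}-Tx_{k_n}\to 0$; since $T$ is fixed-point closed, property~\ref{l:easy3} of Lemma~\ref{l:easy} yields $\bar x\in\Fix T=C$. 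Thus Theorem~\ref{t:main} applies and gives exactly one of \ref{t:fix1}, \ref{t:fix2}.

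Finally I must show that if $\Fix T\neq\varnothing$ then the sequence is always well defined, which closes the trichotomy (it forbids case~\ref{t:fix3} when $\Fix T\neq\varnothing$, and shows that whenever the sequence is well defined and $\Fix T=\varnothing$ we land in \ref{t:fix2}, while whenever it is not well defined we must have $\Fix T=\varnothing$, i.e. \ref{t:fix3}). This follows from (a): if $z\in\Fix T$, then the inductive argument above shows $z\in C_n$ for all $\nnn$, so every $C_n$ is nonempty, closed (as a finite intersection of $X$ with halfspaces, hence closed) and convex, whence $P_{C_n}x_0$ exists and is unique and $x_n$ is well defined for all $\nnn$. The only mildly delicate point is bookkeeping: one must be careful that ``well defined'' means $C_{n+1}\neq\varnothing$ at every step, and note that nonemptiness of $C_n$ together with closedness and convexity is exactly what is needed for the projection $x_n=P_{C_n}x_0$ in Hilbert space to exist; no new estimate beyond Lemma~\ref{l:easy}, quasi nonexpansiveness, and Theorem~\ref{t:main} is required. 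I do not expect a substantive obstacle here — the content of the theorem is entirely carried by Lemma~\ref{l:key} and Theorem~\ref{t:main}, and the remaining work is the routine verification of hypotheses plus the trichotomy bookkeeping.
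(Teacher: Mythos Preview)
Your proposal is correct and follows essentially the same approach as the paper: set $C=\Fix T$ and $y_n=Tx_n$, use quasi nonexpansiveness to show inductively that $\Fix T\subseteq C_n$ (hence the sequence is well defined when $\Fix T\neq\varnothing$), use fixed-point closedness to verify~\eqref{e:superdemi}, and then invoke Theorem~\ref{t:main}. The only cosmetic difference is that the paper folds well-definedness and the containment $\Fix T\subseteq C_{n+1}$ into a single inductive step, whereas you present them in two passes; logically this is the same argument.
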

\begin{proof}
Set $C = \Fix T$, and $(y_n)_\nnn = (Tx_n)_\nnn$ provided that $(x_n)_\nnn$
is well defined.
In this case, it is clear that \eqref{e:superdemi} holds because $T$ is
fixed-point closed.

\ref{t:fix1}: Assume that $C\neq\varnothing$.
If $C_n\neq\varnothing$ and $C\subseteq C_n$, then
$(\forall c\in C)$
$\|Tx_n-c\|\leq \|x_n-c\|$ and so $c\in H(x_n,Tx_n)$.
It follows that $C\subseteq C_{n+1}$ and the sequence $(x_n)_\nnn$
is well defined.
The conclusion thus follows from Theorem~\ref{t:main}.

\ref{t:fix2}\&\ref{t:fix3}: Assume that
$C=\varnothing$.
If $(x_n)_\nnn$ is not well defined, then \ref{t:fix3} happens.
Finally, if $(x_n)_\nnn$ is well defined, then \ref{t:fix2} occurs
again by Theorem~\ref{t:main}.
\end{proof}

Let us now illustrate the three alternatives in Theorem~\ref{t:fix}.

\begin{example}
Suppose that $X=\RR$ and set $T := \alpha\Id$, where
$\alpha\in\left[0,1\right[$.
Then $T$ is nonexpansive with $\Fix T = \{0\}$.
Let $x_0\geq 0$. Then $Tx_0 = \alpha x_0$ and
$C_1 = \left]-\infty,(\alpha+1)/2x_0\right]$.
Thus, $x_1 = (\alpha+1)/2x_0$.
It follows inductively that $(x_n)_\nnn$ is well defined and
\begin{equation}
(\forall \nnn)\quad
x_n = \big((\alpha+1)/2\big)^nx_0 \to 0 = P_{\Fix T}x_0,
\end{equation}
as is also guaranteed by Theorem~\ref{t:fix}\ref{t:fix1}.
\end{example}

\begin{example}
Suppose that $X=\RR$ and set $T\colon X\to X\colon x\mapsto x+\alpha$,
where $\alpha>0$.
Clearly, $T$ is nonexpansive and $\Fix T =\varnothing$.
One checks that $x_n = x_0+n\alpha/2$;
hence, $|x_n| \to +\infty$.
\end{example}

\begin{example}
Suppose $X=\RR$, let $\sigma\colon X\to\{-1,+1\}$, and set
$T_\sigma \colon X\mapsto X\colon x\mapsto x+\sigma(x)$.
For trivial reasons, $T_\sigma$ is quasi nonexpansive (since $\Fix T_\sigma =
\varnothing$) and $T_\sigma$ is fixed-point closed (since
$\ran(\Id-T_\sigma)\subseteq \{+1,-1\}$).
We now assume that $\sigma(0)=1$ and $\sigma(1/2)=-1$.
Let $x_0=0$. Then $C_1 = \left[1/2,+\infty\right[$,
$x_1 = 1/2$ and $C_2 = C_1 \cap \left]-\infty,0\right] =
\varnothing$, which means the algorithm terminates.
\end{example}

\section{Subgradient projector}
\label{s:sp}

The astute reader will ask whether the fairly general assumptions on $T$ in
Theorem~\ref{t:fix}, i.e., that
``$T$ be quasi nonexpansive and fixed-point closed'',
are really needed in applications.
In this section, we provide an example that not only requires this
generality but that also does not satisfy the usual demiclosedness type
assumptions seen in this area.

To this end,
let
\begin{equation}
f\colon X\to\RR
\end{equation}
be convex, continuous, and G\^ateaux differentiable such that
$f\geq 0$
and
\begin{equation}
C := \menge{x\in X}{f(x)\leq 0} = \{0\}.
\end{equation}
Write $g := \nabla f$ for convenience.
The \emph{subgradient projector} in this case is defined by
\begin{equation}
T\colon X\to X\colon
x\mapsto \begin{cases}
x, &\text{if $x=0$;}\\
x - \frac{f(x)}{\|g(x)\|^2}g(x), &\text{if $x\neq 0$.}
\end{cases}
\end{equation}
Then it follows (from e.g., \cite[Proposition~2.3]{MOR}) that
$T$ is \emph{quasi firmly nonexpansive}, i.e.,
\begin{equation}
\label{e:qfne}
(\forall x\in X)(\forall y\in \Fix T)\quad
\|Tx-y\|^2 + \|x-Tx\|^2 \leq \|x-y\|^2.
\end{equation}

\begin{lemma}
\label{l:sp}
\
The following hold:
\begin{enumerate}
\item
\label{l:sp0}
$T$ is quasi nonexpansive.
\item
\label{l:sp1}
$T$ is fixed-point closed.
\item
\label{l:sp2}
$T$ is continuous at $0$.
\item
\label{l:sp3}
If $f$ is Fr\'echet differentiable, then $T$ is continuous.
\end{enumerate}
\end{lemma}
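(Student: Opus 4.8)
The plan is to establish the four assertions essentially in the listed order, since each is either elementary or feeds into the next. For \ref{l:sp0}, I would simply note that quasi firm nonexpansiveness \eqref{e:qfne} immediately implies quasi nonexpansiveness: drop the nonnegative term $\|x-Tx\|^2$ to get $\|Tx-y\|^2\le\|x-y\|^2$ for every $y\in\Fix T$. This requires no calculation beyond that observation.

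For \ref{l:sp1}, suppose $x_n\to\bar x$ and $x_n-Tx_n\to 0$; I must show $\bar x\in\Fix T=\{0\}$, i.e.\ $f(\bar x)\le 0$, equivalently (since $f\ge 0$) $f(\bar x)=0$. If $\bar x=0$ we are done, so assume $\bar x\neq 0$; then for all large $n$ we have $x_n\neq 0$, so $x_n-Tx_n=\frac{f(x_n)}{\|g(x_n)\|^2}g(x_n)$ and hence $\|x_n-Tx_n\|=f(x_n)/\|g(x_n)\|$. Since $f$ is continuous and Gâteaux differentiable with $g=\nabla f$, and $f$ is convex, $g$ is (norm-to-weak, at least) continuous on the line through $\bar x\neq 0$; more simply, $f(x_n)\to f(\bar x)$ and I would argue $\|g(x_n)\|$ stays bounded near $\bar x$ (local boundedness of the subdifferential of a continuous convex function on a neighborhood of $\bar x$, which is bounded away from where things could blow up). Then $x_n-Tx_n\to 0$ forces $f(x_n)\to 0$, so $f(\bar x)=0$, contradicting $C=\{0\}$ unless $\bar x=0$. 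The one subtlety is ruling out $\|g(x_n)\|\to 0$ pathologies: but if $\bar x\neq 0$ then $f(\bar x)>0$, and on a neighborhood of $\bar x$ the convex function $f$ is locally Lipschitz, so $\|g\|$ is locally bounded there; since $f(x_n)$ is bounded below by a positive constant near $\bar x$, we would actually get $\|x_n-Tx_n\|\ge f(x_n)/M\ge c>0$, the desired contradiction.

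For \ref{l:sp2}, continuity of $T$ at $0$: fix $x_n\to 0$; I want $Tx_n\to 0=T0$. If infinitely many $x_n=0$ this is trivial on that subsequence; on the subsequence where $x_n\neq 0$, $\|Tx_n\|=\|Tx_n - 0\|\le\|x_n-0\|=\|x_n\|\to 0$ by \ref{l:sp0} (quasi nonexpansiveness with $y=0\in\Fix T$). So $Tx_n\to 0$ regardless. This is immediate. For \ref{l:sp3}, assuming $f$ is Fréchet differentiable, I must show $T$ is continuous at every $\bar x$; continuity at $0$ is \ref{l:sp2}, and at $\bar x\neq 0$ it follows because $x\mapsto x-\frac{f(x)}{\|g(x)\|^2}g(x)$ is a composition of continuous maps provided $g$ is continuous and $g(\bar x)\neq 0$. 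Fréchet differentiability of the continuous convex $f$ gives norm-to-norm continuity of $g$ (a standard fact: for convex functions, Fréchet differentiability on an open set is equivalent to the gradient being norm continuous there). Finally $g(\bar x)\neq 0$ because if $g(\bar x)=0$ then, $f$ being convex, $\bar x$ would minimize $f$, forcing $f(\bar x)=0$ and thus $\bar x\in C=\{0\}$, contradicting $\bar x\neq 0$. Hence the denominator is bounded away from $0$ near $\bar x$ and $T$ is continuous there.

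The main obstacle is \ref{l:sp1}: the fixed-point-closedness argument hinges on controlling $\|g(x_n)\|$ from above near a putative nonzero limit $\bar x$, so that $x_n-Tx_n\to 0$ genuinely forces $f(x_n)\to 0$. I expect this to reduce to the local Lipschitz property of continuous convex functions (equivalently, local boundedness of the gradient), together with the key structural fact that $g(\bar x)=0$ is incompatible with $\bar x\neq 0$ under the hypothesis $C=\{0\}$ and $f\ge 0$ — the same observation also underpins \ref{l:sp3}. Everything else is a short consequence of \eqref{e:qfne} or of standard convex-analytic continuity facts.
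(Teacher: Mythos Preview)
Your proposal is correct. For \ref{l:sp0}, \ref{l:sp1}, and \ref{l:sp3} it matches the paper's argument essentially verbatim; the only minor variant is in \ref{l:sp1}, where you bound $\|g(x_n)\|$ via local Lipschitzness of the continuous convex $f$ near $\bar{x}\neq 0$, while the paper instead invokes strong-to-weak continuity of the G\^ateaux gradient of a continuous convex function (so $g(x_n)\rightharpoonup g(\bar{x})$, hence bounded) --- both routes yield the same contradiction with $f(x_n)/\|g(x_n)\|\to 0$ and $f(x_n)\to f(\bar x)>0$. The genuine difference is in \ref{l:sp2}: you use quasi nonexpansiveness from \ref{l:sp0} with the fixed point $y=0$ to get $\|Tx_n\|\leq\|x_n\|\to 0$ immediately, whereas the paper computes directly from the subgradient inequality $f(x)\leq\scal{x}{g(x)}\leq\|x\|\,\|g(x)\|$, hence $\|x-Tx\|=f(x)/\|g(x)\|\leq\|x\|$. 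Your route is shorter and reuses \ref{l:sp0}; the paper's route produces the explicit quantitative estimate $\|x-Tx\|\leq\|x\|$ without any appeal to the fixed-point structure.
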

\begin{proof}
\ref{l:sp0}: This follows immediately from \eqref{e:qfne}.

\ref{l:sp1}:
Let $(x_n)_\nnn$ be a sequence in $X$ such that $x_n\to \bar{x}$
and $x_n-Tx_n\to 0$.
We assume that $\bar{x}\neq 0$ (for if $\bar{x}=0$, then the conclusion is
trivially true) and that $(x_n)_\nnn$ lies in $X\smallsetminus\{0\}$.
To reach the required contradiction, observe first
that the continuity of $f$ yields
$f(x_n)\to f(\bar{x})> 0$.
Now $x_n-Tx_n\to 0$
$\Leftrightarrow$
$\|x_n-Tx_n\|\to 0$
$\Leftrightarrow$
$f(x_n)/g(x_n)\to 0$; thus,
\begin{equation}
\label{e:blowup}
\lim_\nnn \|g(x_n)\| = +\infty.
\end{equation}
On the other hand, $g$ is strong-to-weak continuous
(see, e.g., \cite[Proposition~17.31]{BC});
therefore, the sequence
$(g(x_n))_\nnn$ converges weakly to $g(\bar{x})$.
In particular, $(g(x_n))_\nnn$ is bounded --- but this contradicts
\eqref{e:blowup}.

\ref{l:sp2}:
Convexity yields
$(\forall x\in X\smallsetminus\{0\})$
$\scal{0-x}{\nabla f(x)} \leq f(0)-f(x)$, which implies
$f(x) \leq \scal{x}{g(x)}\leq \|x\|\|g(x)\|$; thus,
$f(x)/\|g(x)\| \leq \|x\|$.
Hence $\lim_{x\to 0} Tx = 0 = T0$, as claimed.

\ref{l:sp3}:
If $f$ is Fr\'echet differentiable, then $g$ is strong-to-strong
continuous (see, e.g., \cite[Proposition~17.32]{BC}),
which in turn yields the continuity of $T$
on $\menge{x\in X}{g(x)\neq 0} = X\smallsetminus\{0\}$.
\end{proof}

Note that Lemma~\ref{l:sp} guarantees the applicability of
Theorem~\ref{t:fix} to the subgradient projector $T$.

\begin{example}
\label{ex:bad}
Suppose that $X = \ell^2 = \menge{\bx = (x_n)_{n\geq 1}}{\sum_{n\geq
1}|x_n|^2 < +\infty}$ and set
\begin{equation}
f\colon X\to\RR\colon \bx = (x_n)_{n\geq 1} \mapsto\sum_{n\geq 1}
nx_n^{2n}.
\end{equation}
Then $f$ is well defined, convex, and continuous
(see \cite[Example~7.11]{SIREV}).
Moreover, $f$ is G\^ateaux differentiable with
$g(\bx) = \nabla f(\bx) = (2n^2x_n^{2n-1})_{n\geq 1}$.
Denote the sequence of standard unit vectors by $(\be_n)_{n\geq 1}$,
and set
\begin{equation}
(\forall n\geq 1) \quad \bx_n := \be_1 + \be_n \rightharpoonup \be_1
\end{equation}
For $n\geq 2$, we have
$f(\bx_n) = 1+n$,
$g(\bx_n) = 2\be_1 + 2n^2\be_n$;
hence $\|g(\bx_n)\| = \sqrt{4+ 4n^4}$ and thus
$f(\bx_n)/\|g(\bx_n)\| \to 0$.
It follows that
$\bx_n - T(\bx_n)\to 0$.
Since
\begin{equation}
\left.
\begin{array}{c}
\bx_n\rightharpoonup{\be_1}\\
\bx_n-T(\bx_n)\to 0
\end{array}
\right\}
\;\;\not\Rightarrow\;\;
{\be_1} = 0,
\end{equation}
we see that $\Id-T$ is \emph{not} demiclosed at $0$ and
that $T$ is not weak-to-weak continuous
however,
$T$ is fixed-point closed by  Lemma~\ref{l:sp}\ref{l:sp1}.
\end{example}

\begin{remark}
Some comments regarding Example~\ref{ex:bad} are in order.
\begin{enumerate}
\item
This example illustrates that some of the sufficient conditions
demi-closedness type conditions
provided in the literature (see, e.g.,
\cite[Proposition~2.2]{PLCSICON}) to guarantee convergence are actually
not applicable to the subgradient projector $T$ of the function $f$
defined in Example~\ref{ex:bad}. However, Theorem~\ref{t:fix} is applicable
with $T$ because of Lemma~\ref{l:sp}.
\item
Some additional work (which we omit here)
shows that $f$ is actually Fr\'echet differentiable
on $X$. Thus, by Lemma~\ref{l:sp}\ref{l:sp3}, $T$ is actually
strong-to-strong continuous.
\item
It also follows from the classical demiclosedness principle that $T$
is not nonexpansive.
\end{enumerate}
\end{remark}

\section*{Acknowledgments}
This research was carried out during a visit of JC in Kelowna in Fall
2012.
HHB was partially supported by the Natural Sciences and
Engineering Research Council of Canada and by the Canada Research Chair
Program.
JC was partially supported by the Academic Award for Excellent Ph.D.~
Candidates Funded by Wuhan University, the Fundamental Research
Fund for the Central Universities,
and the Ph.D.~short-time mobility program by Wuhan University.
XW was partially supported by the Natural
Sciences and Engineering Research Council of Canada.

\bibliographystyle{plain}

\end{document}